\newtheorem{thm}{Theorem}[section]
\newtheorem{cor}[thm]{Corollary}
\newtheorem{lem}[thm]{Lemma}
\theoremstyle{definition}
\theoremstyle{remark}
\newtheorem{exm}[thm]{Example}
\newtheorem{rem}[thm]{Remark}
\numberwithin{equation}{section}
\newcommand{\R}{\mathbb{R}}
\newcommand{\N}{\mathbb{N}}
\newcommand{\Z}{\mathbb{Z}}
\newcommand{\T}{\mathbb{T}}
\newcommand{\C}{\mathbb{C}}
\newcommand{\tw}{*}
\newcommand{\Sm}{\mathcal{S}}
\newcommand{\tPh}{\tilde{\Phi}}
\newcommand{\tPs}{\tilde{\Psi}}
\newcommand{\txi}{\tilde{\xi}}
\newcommand{\om}{\omega}
\newcommand{\Om}{\Omega}
\newcommand{\sg} {\sigma}
\newcommand{\la}{\langle}
\newcommand{\ra}{\rangle}
\begin{document}

\title[Weak amenability of weighted Orlicz alg.]
{Weak amenability of weighted Orlicz algebras}

\author[Serap \"{O}ztop]{Serap \"{O}ztop}
\address{Department of Mathematics, Faculty of Science, Istanbul University, Istanbul, Turkey}
\email{oztops@istanbul.edu.tr}

\author{Ebrahim Samei}
\address{Department of Mathematics and Statistics, University of Saskatchewan, Saskatoon, Saskatchewan, S7N 5E6, Canada}
\email{samei@math.usask.ca}

\author{Varvara Shepelska}
\address{Department of Mathematics and Statistics, University of Saskatchewan, Saskatoon, Saskatchewan, S7N 5E6, Canada}
\email{shepelska@gmail.com}

\footnote{{\it Date}: \today.

2010 {\it Mathematics Subject Classification.} Primary 46E30,  43A15, 43A20; Secondary 20J06.

{\it Key words and phrases.} Orlicz spaces, Young functions, locally compact abelian groups, weights, weak amenability.

The second named author was partially supported by NSERC Grant no. 409364-2015 and 2221-Fellowship Program For Visiting Scientists And Scientists On Sabbatical Leave from Tubitak, Turkey. The third named author was also partially supported by a PIMS Postdoctoral Fellowship at the University of Saskatchewan.}







\maketitle

\begin{abstract}
Let G be a locally compact abelian group, $\om:G\to (0,\infty)$ be a weight, and ($\Phi$,$\Psi$) be a complementary pair of strictly increasing continuous Young functions. We show that for the weighted Orlicz algebra $L^\Phi_\om(G)$, the weak amenability is obtained under conditions similar to the one considered in \cite{Z} for weighted group algebras. Our methods can be applied to various families of weighted Orlicz algebras, including weighted $L^p$-spaces.
\end{abstract}

\section{Introduction}

Orlicz spaces represent an important class of Banach function spaces considered in mathematical analysis. This class naturally arises as a generalization of $L^p$-spaces and contains, for example, the well-known Zygmund space $L \log^+ L$ which is a Banach space related to Hardy-Littlewood maximal functions. Orlicz spaces can also contain certain Sobolev spaces as subspaces. Linear properties of Orlicz spaces have been studied thoroughly (see \cite{rao} for example). However, until recently, little attention has been paid to their possible algebraic properties, particularly, if they are considered over translation-invariant measurable spaces. One reason might be that, on its own, an Orlicz space is rarely an algebra with respect to a natural product! For instance, it is well-known that for a locally compact group $G$, $L^p(G)$ ($1<p<\infty$) is an algebra under the convolution product exactly when $G$ is compact \cite{S}. Similar results have also been obtained for other classes of Orlicz algebras (see \cite{AM}, \cite{HKM}, \cite{S} for details).

The preceding results indicate that, in most cases, Orlicz spaces over locally compact groups are simply ``too big" to become algebras under convolution. However, it turned out that it is possible for ``weighted" Orlicz spaces to become algebras. In fact, weighted $L^p$-algebras and their properties have been studied by many authors including J. Wermer on the real line and
Yu. N. Kuznetsova on general locally compact groups (see, for example, \cite{K1}, \cite{K2}, \cite{KM}, \cite{jW} and the references therein). These spaces have various properties and numerous applications in harmonic analysis. For instance, by applying the Fourier transform, we know that Sobolev spaces $W^{k,2}(\T)$ are nothing but certain weighted $l_\om^2(\Z)$ spaces.

Recently, in \cite{OO}, A. Osan\c{c}l{\i}ol and S. \"{O}ztop considered weighted Orlicz algebras over locally compact groups and studied their properties, extending, in part, the results of \cite{K1} and \cite{K2}. In \cite{OS1} and \cite{OS2}, the first two-named authors initiated a more general approach by considering the twisted convolution coming from a 2-cocycle $\Om$ with values in $\C^*$, the multiplicative group of complex numbers. Sufficient conditions on $\Om$ were found ensuring that the twisted convolution coming from $\Om$ turns the Orlicz space to a Banach algebra or a Banach $*$-algebra
\cite[Theorems 3.3 and 4.5]{OS1}. These methods produce abundant families of Arens regular, symmetric dual Banach $*$-algebras in the form of weighted Orlicz algebras, mostly on compactly generated groups with polynomial growth (see \cite[Theorems 5.2 and 5.8]{OS1} and \cite[Theorem 4.2 and 5.3]{OS2}). Certain cohomological properties of weighted Orlicz algebras were also investigated in \cite{OS2} where it was shown that either property can not happen unless $G$ is finite \cite[Theorems 6.2 and 6.3]{OS2}.

In the present paper, we continue our investigation of the cohomology of weighted Orlicz algebras focusing on the concept of weak amenability. In~\cite{Z}, Y. Zhang provided a necessary and sufficient condition on the weight $\om$ ensuring that the weighted group algebra $(L^1_\om(G),*)$ on a locally compact abelian group $G$ is weakly amenable (see also \cite{BCD} and \cite{G} for earlier results). We show that, for a large family of Young functions, the condition presented in \cite{Z} is necessary and sufficient for weak amenability of weighted Orlicz algebras.
As an example, we apply our results to $\Z^d$, the group of $d$-dimensional integers, and characterize when $l^p_\om(\Z^d)$ ($1<p<\infty$) is weakly amenable. We also present many more classes of (non-) weakly amenable weighted Orlicz algebras.

We finish the introductory part by pointing out that throughout this paper we concern ourselves with the
theory of ``bounded multiplications" for Banach algebras and Banach modules, as opposed to ``contractive multiplications". Also weights for us are ``weakly submultiplicative" as opposed to ``submultiplicative".

\section{Preliminaries}

In this section, we give some definitions and state some technical results that will be crucial in the rest of the paper. In what follows, $G$ always denotes a locally compact group with a fixed left Haar measure $ds$.

\subsection{Orlicz Spaces}

We first recall some facts concerning Young functions and Orlicz spaces. Our main reference is \cite{rao}.

A nonzero function $\Phi:[0,\infty) \to[0,\infty]$ is called a Young
function if $\Phi$ is convex, $\Phi(0)=0$, and $\lim_{x\to \infty} \Phi(x)=\infty$. For a Young function $\Phi$, the complementary function $\Psi$ of
$\Phi$ is given by
\begin{align}\label{Eq:Young function-complementary}
\Psi(y)=\sup\{xy-\Phi(x):x\ge0\}\quad(y\geq 0).
\end{align}
It is easy to check that $\Psi$ is also a Young function. Also, if $\Psi$ is the complementary function of $\Phi$, then $\Phi$ is
the complementary function of $\Psi$ and $(\Phi,\Psi)$ is called a
complementary pair. We always have the Young inequality
\begin{align}\label{Eq:Young inequality}
xy\le\Phi(x)+\Psi(y)\quad(x,y\ge0)
\end{align}
for complementary functions $\Phi$ and $\Psi$. By
definition, Young function can have the value $\infty$ at some point,
and hence be discontinuous at this point. However, we always consider the pair of complementary Young
functions $(\Phi,\Psi)$ with both $\Phi$ and $\Psi$ being continuous and strictly increasing. In particular, they attain positive values on $(0,\infty)$. In this case, if we let $\Phi^{-1}$ and $\Psi^{-1}$ to be the inverse functions of $\Phi$ and $\Psi$, respectively, then, by \cite[Lemma~4.8.16]{BS} we have
\begin{align}\label{Eq:Inverse Young inequality}
x\leq \Phi^{-1}(x)\Psi^{-1}(x)\leq 2x \ \ \ (0\leq x<\infty).
\end{align}
Now suppose that $G$ is a locally compact group with a fixed Haar measure $ds$ and $(\Phi,\Psi)$ is a complementary pair of Young functions. We define
\begin{align}\label{Eq:Orlicz defn-0}
\mathcal{L}^\Phi(G)=\left\{f:G\to\C:f \ \text{is measurable and}\  \int_G\Phi(|f(s)|)\,ds <\infty
\right\}.
\end{align}
Since $\mathcal{L}^\Phi(G)$ is not always a linear space, we define the Orlicz space $L^\Phi(G)$ to be
\begin{align}\label{Eq:Orlicz defn}
L^\Phi(G)=\left\{f:G\to\C:\int_G\Phi(\alpha|f(s)|)\,ds <\infty
\mbox{ for some }\alpha>0\right\},
\end{align}
where $f$ indicates a member in equivalence class of measurable functions with respect to the Haar measure $ds$. When $G$ is discrete, we simply use the standard terminology and write $l^\Phi(G)$ instead of $L^\Phi(G)$. The Orlicz space is a Banach space under the (Orlicz) norm $\|\cdot\|_\Phi$
defined for $f\in L^\Phi(G)$ by
\begin{align}\label{Eq:Orlicz norm}
\|f\|_\Phi=\sup\left\{\int_G|f(s)v(s)|\,ds: \int_G\Psi(|v(s)|)\,ds \le1\right\},
\end{align}
where $\Psi$ is the complementary function of $\Phi$. One can also
define the (Luxemburg) norm $N_\Phi(\cdot)$ on $L^\Phi(G)$ by
\begin{align}\label{Eq:Orlicz Luxemburg defn}
N_\Phi(f)=\inf\left\{k>0:\int_G\Phi\left(\frac{|f(s)|}{k}\right)
\,ds \le1\right\}.
\end{align}
It is known that these two norms are equivalent, i.e.,
\begin{align}\label{Eq:Orlicz norm-Luxemburg relation}
N_\Phi(\cdot)\le \|\cdot\|_\Phi\le2 N_\Phi(\cdot)
\end{align}
and
\begin{align}\label{Eq:Orlicz norm-defn relation}
N_\Phi(f)\le1 \ \ \text{if and only if}\ \ \int_G\Phi(|f(s)|)\,ds \le1.
\end{align}
Let $\Sm^\Phi(G)$ be the closure of the linear
space of all step functions in $L^\Phi(G)$. Then $\Sm^\Phi(G)$ is a Banach space
and contains $C_c(G)$, the space of all continuous functions on $G$ with compact support, as a dense subspace \cite[Proposition 3.4.3]{rao}. Moreover, $\Sm^\Phi(G)^*$, the dual of  $\Sm^\Phi(G)$, can be identified with $L^\Psi(G)$ in a natural way \cite[Theorem 4.1.6]{rao}. Another useful characterization of $\Sm^\Phi(G)$ is that
$f\in \Sm^\Phi(G)$ if and only if for every $\alpha>0$, $\alpha f\in \mathcal{L}^\Phi(G)$ \cite[Definition 3.4.2 and Proposition 3.4.3]{rao}.

A Young function $\Phi$ satisfies the $\Delta_2$-condition 
if there
exists a constant $K>0$
such that
\begin{align}\label{Eq:Delta 2 condition}
\Phi(2x)\le K\Phi(x) \ \ \text{for all}\ \ x\ge 0.
\end{align}
In this case we write
$\Phi\in\Delta_2$.
If $\Phi\in\Delta_2$, then it follows that $L^\Phi(G)=\Sm^\Phi(G)$ so that  $L^\Phi(G)^*=L^\Psi(G)$ \cite[Corollary 3.4.5]{rao}. If, in addition, $\Psi\in\Delta_2$, then the Orlicz space $L^\Phi(G)$ is a reflexive Banach space.

As in \cite[Page 20]{rao}, we say that two Young functions $\Phi_1$ and $\Phi_2$ are {\it strongly equivalent} and write $\Phi_1 \approx \Phi_2$ if there exists $0<a\leq b<\infty$ such that
\begin{equation}\label{Eq:equivalent young functions}
\Phi_1(ax)\leq \Phi_2(x)\leq \Phi_1(bx) \ \ \ (x\geq 0).
\end{equation}
It is clear from the definition of the Orlicz space \eqref{Eq:Orlicz defn} that the strongly equivalent Young functions generate the same Orlicz space.

We will frequently use the (generalized) H\"{o}lder's inequality for Orlicz spaces
\cite[Remark 3.3.1]{rao}.
More precisely, for any complementary pair of Young functions $(\Phi,\Psi)$
and any $f\in L^\Phi(G)$ and $g\in L^\Psi(G)$, we have
\begin{align}\label{Eq:Holder inequality}
\|fg\|_1:=\int_G |f(s)g(s)|ds \leq \min\{N_\Phi(f)\|g\|_\Psi , \|f\|_\Phi N_\Psi(g)\}.
\end{align}
This, in particular, implies that $fg\in L^1(G)$.

In general, there is a straightforward method to construct various complementary pairs of strictly increasing continuous Young functions as described in
\cite[Theorem 1.3.3]{rao}. Suppose that $\varphi: [0,\infty)\to [0,\infty)$ is a continuous strictly increasing function with $\varphi(0)=0$ and
$\lim_{x\to \infty} \varphi(x)=\infty.$
Then $$\Phi(x)=\int_0^x \varphi(y)dy$$ is a continuous strictly increasing Young function and
$$\Psi(y)=\int_0^y \varphi^{-1}(x)dx$$
is the complementary Young function of $\Phi$ which is also continuous and strictly increasing.
Here $\varphi^{-1}(x)$ is the inverse function of $\varphi$. Below are several families of examples obtained using the above construction (see \cite[Proposition 2.11]{ML} and \cite[Page 15]{rao} for more details):

$(1)$ For $1< p,q<\infty$ with $\frac{1}{p}+\frac{1}{q}=1$, if $\Phi(x)=\frac{x^p}{p}$, then $\Psi(y)=\frac{y^q}{q}$. In this case,
the space $L^{\Phi}(G)$ becomes the Lebesgue space $L^p(G)$ and the norm $\|\cdot\|_{\Phi}$ is equivalent to the
classical norm $\|\cdot\|_{p}$.


(2) If $\Phi(x)=\cosh x-1$, then $\Psi(x)\approx x\ln (1+x)$.

(3) If $\Phi(x)=(1+x)\ln(1+x)-x$, then $\Psi(x)=e^x-x-1$.



\subsection{Weighted Orlicz algebras}\label{S:Twisted Orlicz alg}

In this section, we present and summarize what we need from the theory of twisted Orlicz algebras. These are taken from \cite{OS1}.


A {\bf weight} on $G$ is a locally integrable measurable function $\om : G \to \R_+$
with $\om(e)=1$ and $1/\om\in L^\infty(G)$ such that there is $C>0$ satisfying
\begin{align}\label{Eq:defn-weight}
\om(st)\leq C\om(s)\om(t) \ \ \ (s,t\in G)
\end{align}
In this case, we define
$$\Om(s,t)=\frac{\om(st)}{\om(s)\om(t)} \ \ \ (s,t\in G).$$
For a weight $\om$ on $G$, we define the weighted spaces
$$L^\Phi_\om(G):=\{f: f\om \in L^\Phi(G) \},$$
and
$$\Sm^\Phi_\om(G):=\{f: f\om \in \Sm^\Phi(G) \}.$$
It is clear that with the norm
$$\|f\|_{\Phi,\om}:=\|f\om\|_\Phi,$$
$L^\Phi_\om(G)$ becomes a Banach space having $\Sm^\Phi_\om(G)$ as a closed subspace. We say that $L^\Phi_\om(G)$ is a {\bf weighted Orlicz algebra} if it is a Banach algebra under the convolution product.  In this case, $\Sm^\Phi_\om(G)$ becomes a closed subalgebra of $L^\Phi_\om(G)$ which we call the {\bf maximal essential subalgebra of} $L^\Phi_\om(G)$.

In \cite{OS1}, sufficient conditions on $\om$ were found under which the convolution turns a weighted Orlicz space into a Banach algebra (see \cite[Lemma 3.2 and Theorem 3.3]{OS1}). We summarize them below:

\begin{thm}\label{T:twisted Orlicz alg}
Let $G$ be a locally compact group and $\om$ a weight on $G$. \\
$(i)$ $L_\om^\Phi(G)$ is a Banach $L_\om^1(G)$-bimodule with respect to the convolution having $\Sm_\om^\Phi(G)$ as an essential Banach $L_\om^1(G)$-submodule.\\
$(ii)$ Suppose that there exist non-negative measurable functions $u$ and $v$
in $L^\Psi(G)$ such that
\begin{align}\label{Eq:2-cocycle bdd sum}
|\Om(s,t)|\leq u(s)+v(t) \ \ \ (s,t\in G).
\end{align}
Then for every $f,g\in L_\om^\Phi(G)$, the convolution is well-defined on $L_\om^\Phi(G)$
so that $(L_\om^\Phi(G),\tw)$ becomes a twisted Orlicz algebra having $\Sm_\om^\Phi(G)$ as a closed subalgebra.
\end{thm}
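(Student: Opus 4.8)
The plan is to verify directly that the convolution product is well-defined and bounded on $L^\Phi_\om(G)$ under the hypothesis \eqref{Eq:2-cocycle bdd sum}, and then to check that $\Sm^\Phi_\om(G)$ is closed under this product. The natural starting point is the identity relating the weighted convolution to the twisted convolution: for $f,g\in L^\Phi_\om(G)$ one has
\begin{align*}
(f*g)(s)\,\om(s) = \int_G f(t)\,\om(t)\cdot g(t^{-1}s)\,\om(t^{-1}s)\cdot\Om(t,t^{-1}s)\,dt,
\end{align*}
so writing $F=f\om$ and $H=g\om$ (both in $L^\Phi(G)$, hence locally in $L^1$) it suffices to bound the ``twisted convolution'' $\int_G F(t)H(t^{-1}s)\Om(t,t^{-1}s)\,dt$ in the $\|\cdot\|_\Phi$-norm by a constant multiple of $\|F\|_\Phi\|H\|_\Phi$.

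First I would invoke the bound \eqref{Eq:2-cocycle bdd sum}: $|\Om(t,t^{-1}s)|\le u(t)+v(t^{-1}s)$, splitting the integral into two pieces. The first piece is dominated by $\int_G |F(t)|u(t)\,|H(t^{-1}s)|\,dt = \big((|F|u)*|H|\big)(s)$, and the second by $\big(|F|*(|H|v)\big)(s)$. By H\"older's inequality \eqref{Eq:Holder inequality}, since $u,v\in L^\Psi(G)$ and $F,H\in L^\Phi(G)$, the functions $|F|u$ and $|H|v$ lie in $L^1(G)$ with $\||F|u\|_1\le N_\Phi(F)\|u\|_\Psi$ and $\||H|v\|_1\le N_\Phi(H)\|v\|_\Psi$. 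So each piece is the convolution of an $L^1(G)$-function with an $L^\Phi(G)$-function, and part $(i)$ of the theorem (the $L^1_\om$-bimodule structure — or rather its unweighted prototype, the standard fact that $L^1(G)$ acts boundedly on $L^\Phi(G)$ by convolution) gives
\begin{align*}
\|(|F|u)*|H|\|_\Phi \le \||F|u\|_1\,\|H\|_\Phi \le N_\Phi(F)\|u\|_\Psi\|H\|_\Phi,
\end{align*}
and symmetrically for the other term. Combining and using $N_\Phi\le\|\cdot\|_\Phi$ from \eqref{Eq:Orlicz norm-Luxemburg relation}, one obtains $\|f*g\|_{\Phi,\om}\le (\|u\|_\Psi+\|v\|_\Psi)\|f\|_{\Phi,\om}\|g\|_{\Phi,\om}$, which is the desired submultiplicativity (up to the harmless constant, absorbed by an equivalent renorming). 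Associativity of $*$ is the usual Fubini computation, valid once everything is seen to be absolutely convergent.

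For the statement that $\Sm^\Phi_\om(G)$ is a closed subalgebra, closedness is immediate from the definition, so the point is that $f,g\in\Sm^\Phi_\om(G)$ implies $f*g\in\Sm^\Phi_\om(G)$. Using the characterization that $h\in\Sm^\Phi(G)$ iff $\alpha h\in\mathcal L^\Phi(G)$ for every $\alpha>0$, and the bimodule structure in part $(i)$ under which $\Sm^\Phi_\om(G)$ is an essential submodule, one reduces to approximating $f$ and $g$ by compactly supported continuous functions and noting $C_c(G)*C_c(G)\subseteq C_c(G)\subseteq\Sm^\Phi_\om(G)$; density plus the boundedness just established then finish it. The main obstacle I anticipate is purely bookkeeping: justifying the interchange of integration in the convolution identity and the application of Fubini requires knowing the relevant integrals converge absolutely, and this has to be extracted carefully from the H\"older estimates above before the norm bound can even be written down — i.e., one must first establish that $(f*g)(s)$ is finite for (almost) every $s$ and measurable in $s$, and only then estimate its norm. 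Everything else is a routine assembly of \eqref{Eq:Holder inequality}, \eqref{Eq:2-cocycle bdd sum}, and part $(i)$.
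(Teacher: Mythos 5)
The paper itself does not prove this theorem but quotes it from \cite[Lemma 3.2 and Theorem 3.3]{OS1}, and your argument is essentially the proof used there: bound $|\Om(t,t^{-1}s)|\leq u(t)+v(t^{-1}s)$, apply H\"older's inequality \eqref{Eq:Holder inequality} to place $(f\om)u$ and $(g\om)v$ in $L^1(G)$, and then invoke the $L^1$-bimodule structure from part (i). The proposal is correct, granting the Fubini/measurability bookkeeping you flag; for the subalgebra claim you could also bypass the question of density of $C_c(G)$ in $\Sm^\Phi_\om(G)$ by using that $\Sm^\Phi(G)$ is solid (closed under domination in absolute value) together with the essential-submodule statement in (i).
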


Let $G$ be a compactly generated abelian group. Then, by the Structure Theorem,
\begin{align}\label{Eq:Compactly gen abelian group-structure thm}
G\cong \R^k \times \Z^m \times T
 \end{align}
 where $k,m \in \N \cup \{0\}$ and $T$ is a compact (abelian) group. We can then consider the
 generating set
 \begin{align}\label{Eq:Compactly gen abelian group-generating set}
 U=(-1,1)^k\times \{-1,0,1\}^m\times T
 \end{align}
 and define a {\it length function} $|\cdot|_U : G \to \N\cup \{0\}$ by
\begin{align}\label{Eq:length function}
|s|_U=\inf \{n\in \N : s\in U^n \} \ \ \text{for} \ \ s \neq e, \ \ |e|=0.
\end{align}
When there is no fear of ambiguity, we write $|\cdot|$ instead of $|\cdot|_U$.
It is straightforward to verify that $|\cdot|$ is a symmetric subadditive function on $G$, i.e.
\begin{align}\label{Eq:lenght func-trai equality}
|s+t|\leq |s|+|t| \ \ \text{and} \ \ |s|=|-s|\ \ \ (s,t\in G).
\end{align}
Now if $\nu:\N\cup \{0\}\to \R^+$ is a continuous increasing subadditive function with $\nu(0)=0$ and $\lim_{n\to \infty}\nu(n)=\infty$, then
\begin{align}\label{Eq:weight-lenght function}
\om(t)=e^{\nu(|t|)} \ \ \ (t\in G)
\end{align}
is a weight on $G$. This gives rise to a number of different weights on $G$.
For example, for every $0< \alpha \leq 1$, $\beta \geq 0$, $\gamma >0$, and $C>0$,
we can define the {\it polynomial weight} $\om_\beta$ on $G$ of order $\beta$ by
\begin{align}\label{Eq:poly weight-defn}
\om_\beta(s)=(1+|s|)^\beta  \ \ \ \ (s\in G),
\end{align}
and the {\it subexponential weights} $\sg_{\alpha, C}$ and $\rho_{\beta,C}$ on $G$ by
\begin{align}\label{Eq:Expo weight-defn}
\sg_{\alpha,C}(s)=e^{C|s|^\alpha} \ \ \ \ (s\in G)
\end{align}
and
\begin{align}\label{Eq:Expo weight II-defn}
\rho_{\gamma,C}(s)=e^\frac{C|s|}{(\ln (1+|s|))^\gamma} \ \ \ \ (s\in G).
\end{align}

We can apply Theorem~\ref{T:twisted Orlicz alg}(ii) to these weights to obtain weighted Orlicz algebras. In particular, we have the following result (\cite[Corollary 5.3 and Theorem 5.8]{OS1}).

\begin{thm}\label{T:twisted Orlicz alg-poly and exp weight-Poly growth}
Let $G$ be a compactly generated abelian group, and $\om$ be a weight on $G$.
Then $(L_\om^\Phi(G),\tw)$ is a weighted Orlicz algebra if $\om$ is either one of the following weights:\\
$(i)$ $\om=\om_\beta$, the polynomial weight \eqref{Eq:poly weight-defn} with $1/\om \in \Sm^\Psi(G)$;\\
$(ii)$ $\om=\sg_{\alpha,C}$, the subexponential weight \eqref{Eq:Expo weight-defn};\\
$(iii)$ $\om=\rho_{\gamma,C}$, the subexponential weight \eqref{Eq:Expo weight II-defn}.
\end{thm}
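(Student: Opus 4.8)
The plan is to reduce Theorem~\ref{T:twisted Orlicz alg-poly and exp weight-Poly growth} to an application of Theorem~\ref{T:twisted Orlicz alg}(ii): it suffices to produce, for each of the three weight families, non-negative functions $u,v\in L^\Psi(G)$ with $|\Om(s,t)|\leq u(s)+v(t)$. Since all three weights have the form $\om(t)=e^{\nu(|t|)}$ with $\nu$ subadditive, the subadditivity of $|\cdot|$ gives $\nu(|s+t|)\leq \nu(|s|+|t|)$, and one would like to further split this. The basic mechanism is the following elementary inequality for a concave-type $\nu$: one shows $\nu(a+b)\leq \nu(a)+\nu(b)$ directly from subadditivity, but that only yields $\Om(s,t)\leq 1$, which is bounded but need not lie in $L^\Psi(G)$ when $G$ is non-compact. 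So the real content is to extract \emph{decay}: for the polynomial and the two subexponential weights one wants a bound of the form $\Om(s,t)\leq h(|s|)+h(|t|)$ where $h$ tends to $0$, and then to check $h(|\cdot|)\in L^\Psi(G)$.

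First I would treat the polynomial weight $\om_\beta(s)=(1+|s|)^\beta$. Here $\Om(s,t)=\dfrac{(1+|s+t|)^\beta}{(1+|s|)^\beta(1+|t|)^\beta}$, and using $1+|s+t|\leq (1+|s|)(1+|t|)$ one gets $\Om(s,t)\leq \min\{(1+|s|)^{-\beta},(1+|t|)^{-\beta}\}\leq (1+|s|)^{-\beta}+(1+|t|)^{-\beta}$ (one can even be more careful and use $\Om(s,t)\le (1+|s|)^{-\beta}$ alone, but the symmetric sum is cleaner for invoking the theorem). Thus one takes $u(s)=v(s)=(1+|s|)^{-\beta}=\om_\beta(s)^{-1}=1/\om(s)$, and the hypothesis $1/\om\in\Sm^\Psi(G)\subseteq L^\Psi(G)$ in part~$(i)$ is exactly what makes $u,v\in L^\Psi(G)$. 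That disposes of~$(i)$.

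For the subexponential weights in~$(ii)$ and~$(iii)$ the function $\nu$ is of the form $\nu(n)=Cn^\alpha$ ($0<\alpha\le 1$) or $\nu(n)=Cn/(\ln(1+n))^\gamma$, both continuous increasing subadditive with $\nu(0)=0$; the key extra feature is that $\nu$ grows \emph{strictly sublinearly}, i.e. $\nu(n)/n\to 0$. I would prove a lemma: if $\nu$ is subadditive, increasing, with $\nu(n)/n\to 0$, then $\Om(s,t)=e^{\nu(|s+t|)-\nu(|s|)-\nu(|t|)}$ is bounded and, more importantly, for every $\varepsilon>0$ there is a compact-modulo set outside which it is $<\varepsilon$; concretely one shows $\nu(|s+t|)-\nu(|s|)-\nu(|t|)\le -\min\{\delta(|s|),\delta(|t|)\}$ for a suitable function $\delta\to\infty$, using that for the two specific $\nu$'s one has $\nu(a+b)\le\nu(a)+\nu(b)-c(a,b)$ with an explicit correction term $c$ bounded below away from $0$ once both $a,b$ are large, and handled separately when one of them is small. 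This gives $\Om(s,t)\le g(|s|)+g(|t|)$ with $g$ decaying fast enough; then since $g(|\cdot|)$ decays and $G$ has at most polynomial growth for these compactly generated groups... actually one should just verify $g(|\cdot|)\in L^\Psi(G)$ directly, e.g. by checking $\int_G \Psi(\lambda g(|s|))\,ds<\infty$ for some $\lambda>0$ using the volume growth of the balls $U^n$ together with the decay rate of $g$. In practice $g$ can be taken of the form $e^{-c|s|^\alpha}$ or similar, which lies in every Orlicz space over a group of sub-exponential growth, so $g(|\cdot|)\in L^\Psi(G)$ is immediate.

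The main obstacle is the precise subadditivity-with-gain estimate for $\nu$ in cases~$(ii)$ and~$(iii)$: showing $\nu(|s|)+\nu(|t|)-\nu(|s+t|)$ is not merely non-negative but grows to $\infty$ (at a controlled rate) as $|s|,|t|\to\infty$, uniformly enough to be split as $\min\{\delta(|s|),\delta(|t|)\}$. For $\nu(n)=Cn^\alpha$ this is the standard convexity/concavity computation (writing $\nu(|s+t|)\le\nu(|s|+|t|)$ and expanding $(|s|+|t|)^\alpha$ versus $|s|^\alpha+|t|^\alpha$), while for $\nu(n)=Cn/(\ln(1+n))^\gamma$ one must be more delicate because $\nu$ is only \emph{asymptotically} concave and the logarithmic factor makes the gain small; a mean-value-theorem estimate on $\nu'$ together with a case split ($|s|$ and $|t|$ comparable vs. one much larger than the other) should give what is needed, and this case split is where the bookkeeping is heaviest. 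Once the gain estimate is in hand, the passage to $u,v\in L^\Psi(G)$ and the invocation of Theorem~\ref{T:twisted Orlicz alg}(ii) are routine.
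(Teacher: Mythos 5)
Your overall strategy is exactly the one the paper intends: the paper does not prove this theorem itself but quotes it from \cite{OS1} (Corollary 5.3 and Theorem 5.8), and the proof there is precisely what you propose --- verify the hypothesis \eqref{Eq:2-cocycle bdd sum} of Theorem~\ref{T:twisted Orlicz alg}(ii) for each weight, using $u=v=\mathrm{const}/\om_\beta$ in the polynomial case and subexponentially decaying $u=v$ in the other two cases, the membership in $L^\Psi(G)$ (indeed $\Sm^\Psi(G)$) coming from the polynomial growth of a compactly generated abelian group. So the route is the right one; two points in your write-up need repair.

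First, in case (i) your stated estimate is wrong as written: from $1+|s+t|\leq(1+|s|)(1+|t|)$ one only gets $\Om(s,t)\leq 1$, and the bound $\Om(s,t)\leq\min\{(1+|s|)^{-\beta},(1+|t|)^{-\beta}\}$ is false (take $t=e$ and $|s|$ large: the left side is $1$, the right side tends to $0$); likewise $\Om(s,t)\le(1+|s|)^{-\beta}$ ``alone'' fails. The correct elementary estimate is $1+|s+t|\le 2\max\{1+|s|,1+|t|\}$, giving $\Om(s,t)\le 2^\beta\max\{(1+|s|)^{-\beta},(1+|t|)^{-\beta}\}\le 2^\beta\bigl[\om_\beta(s)^{-1}+\om_\beta(t)^{-1}\bigr]$, i.e.\ a \emph{max} with a constant, not a min; the constant is harmless since $\Sm^\Psi(G)$ is a linear space, so $u=v=2^\beta/\om_\beta$ still works. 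Second, in cases (ii)--(iii) your plan hinges on the gain $\nu(a)+\nu(b)-\nu(a+b)\to\infty$ in $\min\{a,b\}$, which does hold for $\nu(n)=Cn^\alpha$ with $0<\alpha<1$ (concavity gives gain $\ge(1-\alpha)C\min\{a,b\}^\alpha$) and for $\nu(n)=Cn/(\ln(1+n))^\gamma$ (a mean-value estimate gives gain of order $\min\{a,b\}/(\ln\min\{a,b\})^{\gamma+1}$), but it fails completely at $\alpha=1$, which the paper's definition \eqref{Eq:Expo weight-defn} nominally allows: for $\om(s)=e^{C|s|}$ on $\Z$ one has $\Om(s,t)=1$ for all $s,t>0$, so no $u,v\in \ell^\Psi(\Z)$ can satisfy \eqref{Eq:2-cocycle bdd sum} (and in fact $\ell^p_\om(\Z)$, $p>1$, is not an algebra). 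So your argument, like the cited one, proves the statement for $0<\alpha<1$; you should say this explicitly rather than listing $0<\alpha\le1$ while simultaneously assuming $\nu(n)/n\to0$. Finally, your aside that $e^{-c|s|^\alpha}$ lies in every Orlicz space over groups of subexponential growth is too strong; what you have, and all you need, is polynomial growth of $U^n$ together with convexity of $\Psi$ (so $\Psi(\lambda g(n))\le g(n)\Psi(\lambda)$) to get $\int_G\Psi(\lambda g(|s|))\,ds<\infty$. With these corrections the proposal matches the intended proof.
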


We finish this section by pointing out that in the proof of \cite[Corollary 5.3]{OS1}, it was shown that $1/\om_\beta \in \Sm^\Psi(G)$ if $\beta> \frac{k+m}{l}$,
where $k$ and $m$ come from the representation \eqref{Eq:Compactly gen abelian group-structure thm} and
$l\geq 1$ is such that $\lim_{x\to 0^+}\frac{\Psi(x)}{x^l}$ exists.






\section{Weak amenability}

A Banach algebra $A$ is {\it weakly amenable} if every bounded derivation $D$ from $A$ into $A^*$ is inner \cite[Definition 4.2.1]{Run}. When $A$ is commutative, one can use the equivalent formulation that $A$ is weakly amenable if every bounded derivation from $A$ into any symmetric Banach $A$-module is zero \cite[Theorem 2.8.63]{D}. The class of weakly amenable Banach algebras is much larger than the class of amenable ones. For instance, all C$^*$-algebras and group algebras are weakly amenable \cite[Theorems 4.2.3 and 4.2.4]{Run}.

In \cite[Thereom 3.1]{Z}, Y. Zhang has found a necessary and sufficient condition, formulated below in \eqref{Eq:group homo vs weak amen}, for weak amenability of weighted group algebras on abelian groups. His work extends the previous partial results in this direction (see \cite{Z} and the reference therein). Our goal is to further extend the result of Zhang to weighted Orlicz algebras. Interestingly, we will see that, in most cases, the criterion found in \cite[Theorem 3.1]{Z} also works in our settings.

We start with the following theorem which shows that the sufficient condition formulated in \cite{Z} will imply weak amenability of the maximal essential subalgebras of weighted Orlicz algebras. Note that in what follows we always consider $\mathbb{C}$ as an additive group.

\begin{thm}\label{T:weighted Orlicz algebra-weak amenable}
Let $G$ be a locally compact abelian group, $\om$ be a weight on $G$, and $(\Phi,\Psi)$ be a complementary pair of Young functions such that $(L^\Phi_\om(G),*)$ is a Banach algebra. Suppose that there exists no nonzero continuous group
homomorphism $\xi:G\to \C$ such that $\txi \in L^\infty(G)$, where
\begin{align}\label{Eq:unbounded group homo vs weak amen}
\txi(s):=\frac{\xi(s)}{\om(s)\om(s^{-1})} \ \ \ \ (s\in G).
\end{align}
Then  $\Sm^\Phi_\om(G)$ is weakly amenable.
\end{thm}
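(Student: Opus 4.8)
The plan is to mimic, in the Orlicz setting, the classical strategy for proving weak amenability of commutative Banach algebras: since $\Sm^\Phi_\om(G)$ is commutative, it suffices (by \cite[Theorem 2.8.63]{D}) to show that every bounded derivation $D\from \Sm^\Phi_\om(G)\to X$ into a symmetric Banach $\Sm^\Phi_\om(G)$-module $X$ is zero; and by a standard reduction it is enough to treat $X=(\Sm^\Phi_\om(G))^*$, i.e. to show every bounded point derivation at every character vanishes, and more precisely that $D^*$ composed with the canonical map into the bidual forces $D$ to be trivial. In practice the cleanest route is: assume $D\neq 0$ and extract from $D$ a nonzero continuous group homomorphism $\xi\from G\to\C$, then show the hypothesis ``$\txi\notin L^\infty(G)$'' is violated.

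First I would set up the functional-analytic machinery. Recall from Theorem~\ref{T:twisted Orlicz alg}(i) that $\Sm^\Phi_\om(G)$ is an essential Banach $L^1_\om(G)$-module; since $L^1_\om(G)$ has a bounded approximate identity, any bounded derivation $D\from \Sm^\Phi_\om(G)\to X$ into a Banach $\Sm^\Phi_\om(G)$-bimodule extends compatibly, and on a commutative algebra the module can be assumed symmetric. The key observation is that for $f\in\Sm^\Phi_\om(G)$ and a character $\chi$ of $G$, the translation and modulation structure lets one ``differentiate'' $D$ along the group: evaluating $D$ on translates $\delta_s * f$ (interpreted via the module action of $L^1_\om(G)$, using approximate identities) produces a function $s\mapsto \la D(L_s f),\phi\ra$ whose behaviour is governed by a cocycle identity. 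Concretely, following Zhang's argument, from a nonzero bounded derivation one produces a nonzero bounded linear functional $d$ on $\Sm^\Phi_\om(G)$ and a continuous character-like object so that $d(f*g)=\hat{f}(\gamma)d(g)+\hat{g}(\gamma)d(f)$ for some $\gamma\in\hG$; the ``coefficient'' of this point derivation is exactly a group homomorphism $\xi\from G\to\C$.

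The crucial quantitative step is then to relate boundedness of $d$ on $\Sm^\Phi_\om(G)$ to integrability of $\txi$. Here is where the Orlicz structure enters: I would test the point derivation on explicit functions supported near a point $s\in G$ — say normalized indicator-type functions $f_s$ with $\|f_s\|_{\Phi,\om}\approx \om(s)/\Phi^{-1}(1/|V|)$ for a small neighbourhood $V$ — and similarly near $s^{-1}$, and use the derivation identity on $f_s * f_{s^{-1}}$ (which concentrates near $e$) to obtain an inequality of the form $|\xi(s)|\lesssim \|D\|\,\om(s)\om(s^{-1})$ for all $s$. That is precisely $|\txi(s)|\lesssim \|D\|$, i.e. $\txi\in L^\infty(G)$, forcing $\xi=0$ by hypothesis, hence $D=0$. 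The inverse Young inequality \eqref{Eq:Inverse Young inequality} should be what makes the Orlicz norm estimates on these test functions behave like the $L^1$ estimates in Zhang's proof, so that the Orlicz norm factors that appear cancel in the product $f_s * f_{s^{-1}}$.

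The main obstacle I anticipate is making the ``differentiate $D$ along $G$'' step fully rigorous in the Orlicz setting: translations are isometric-up-to-$\om$ but need not be strongly continuous on all of $L^\Phi_\om(G)$ unless one is on the essential subalgebra $\Sm^\Phi_\om(G)$ — which is exactly why the theorem is stated for $\Sm^\Phi_\om(G)$ and not $L^\Phi_\om(G)$. So I would be careful to carry out the whole argument inside $\Sm^\Phi_\om(G)$, using density of $C_c(G)$ and the characterization $f\in\Sm^\Phi(G)\iff \alpha f\in\mathcal{L}^\Phi(G)$ for all $\alpha>0$ to justify the approximations, and using H\"older's inequality \eqref{Eq:Holder inequality} to control the module action. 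A secondary technical point is choosing the test functions $f_s$ so that the product $f_s*f_{s^{-1}}$ is genuinely non-degenerate near $e$ (uniformly in $s$) while the norm bookkeeping stays clean; a uniform lower bound on $\la f_s * f_{s^{-1}}, d\ra$ in terms of $d$ evaluated at a fixed bump at $e$ is what closes the loop.
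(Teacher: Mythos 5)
Your plan is not the paper's, and it has a genuine gap at its core. The step ``from a nonzero bounded derivation one produces a nonzero bounded linear functional $d$ \ldots so that $d(f*g)=\hat{f}(\gamma)d(g)+\hat{g}(\gamma)d(f)$ for some $\gamma\in\hG$'' is a reduction of weak amenability to the vanishing of bounded point derivations at characters, and this reduction is false for commutative Banach algebras. A nonzero bounded derivation into $A^*$ (or into a symmetric module) need not give rise to any nonzero bounded point derivation: for example, $\ell^1_\om(\Z)$ with $\om(n)=(1+|n|)^{1/2}$ has no nonzero bounded point derivations at any character (boundedness would force $|n|\leq C(1+|n|)^{1/2}$), yet it is not weakly amenable --- indeed the witnessing derivation is of the convolution form $D(f)=1_U*(\check{f}\check{\xi})$, not a point derivation. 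Since your subsequent quantitative step (testing on bumps $f_s$, $f_{s^{-1}}$ and extracting $|\xi(s)|\lesssim\|D\|\,\om(s)\om(s^{-1})$) is built on that cocycle identity, the whole chain collapses. What your outline really asks for is a proof, in the Orlicz setting, of the hard direction of Zhang's theorem (extracting a continuous group homomorphism $\xi$ from an arbitrary nonzero bounded derivation on $\Sm^\Phi_\om(G)$); no mechanism for this extraction is given, and Zhang's own argument uses the specific $L^1$--$L^\infty$ duality and averaging over the (amenable) group, which does not transfer verbatim to $L^\Phi_\om$.

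The paper avoids all of this by a transfer argument rather than a re-proof. Under the stated hypothesis, Zhang's theorem gives weak amenability of $L^1_\om(G)$ itself. Then one forms $\Sm^{1,\Phi}_\om(G):=L^1_\om(G)\cap\Sm^\Phi_\om(G)$ with the sum norm $\|\cdot\|_{1,\om}+\|\cdot\|_{\Phi,\om}$; by \cite[Lemma 3.2]{OS1} and \cite[Lemma 3.1]{OS2} this is an abstract Segal algebra in $L^1_\om(G)$, so by \cite[Theorem 4.1.10]{D} it inherits weak amenability, and since it maps continuously with dense range into $\Sm^\Phi_\om(G)$, \cite[Proposition 2.8.64]{D} yields weak amenability of $\Sm^\Phi_\om(G)$. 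If you want to salvage your write-up, replace the derivation-to-homomorphism analysis by exactly this Segal-algebra/dense-range transfer; the only Orlicz-specific input needed is that $\Sm^\Phi_\om(G)$ is an essential $L^1_\om(G)$-module (Theorem~\ref{T:twisted Orlicz alg}(i)) and that $L^1_\om\cap\Sm^\Phi_\om$ is dense in $\Sm^\Phi_\om(G)$, not any estimate of the type \eqref{Eq:Inverse Young inequality}.
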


\begin{proof}
We first note that, by the main result of \cite[Theorem 3.1]{Z}, the nonexistence of group homomorphisms $\xi:G\to \C$ satisfying \eqref{Eq:unbounded group homo vs weak amen} will imply (in fact, it is equivalent to) the weak amenability of $L^1_\om(G)$. Now consider the space
$$\Sm_\om^{1,\Phi}(G):=L_\om^1(G)\cap \Sm_\om^\Phi(G).$$
It is clear that $\Sm_\om^{1,\Phi}(G)$ with the norm
$$\|\cdot\|_\om^{1,\Phi}:=\|\cdot\|_{1,\om}+\|\cdot\|_{\Phi,\om}$$
is a Banach space. Moreover, it follows from \cite[Lemma 3.2]{OS1} and \cite[Lemma 3.1]{OS2} that
$\Sm_\om^{1,\Phi}(G)$ becomes an abstract Segal subalgebra of $L_\om^1(G)$ in the sense of \cite[Definition 4.1.8]{D}. Hence, by \cite[Theorem 4.1.10]{D}, $\Sm_\om^{1,\Phi}(G)$ is weakly amenable. In particular, this implies that $\Sm_\om^{\Phi}(G)$ is weakly amenable by \cite[Proposition 2.8.64]{D} and the fact that $\Sm_\om^{1,\Phi}(G)$ is dense in $\Sm_\om^\Phi(G)$.



\end{proof}


The rest of this section is devoted to showing that, in many instances, the condition in Theorem \ref{T:weighted Orlicz algebra-weak amenable} is also necessary for weak amenability of weighted Orlicz algebras and their maximal essential subalgebras. After proving a technical lemma, the main results will be presented in Theorems~\ref{T:weighted Orlicz algebra-non weak amenable}~and~\ref{T:weighted Orlicz algebra-non weak amenable-compactly generated group}.


\begin{lem}\label{L:Young functions-square root}
Let $(\Phi,\Psi)$ be a complementary pair of Young functions.
Suppose that $\tPs(x):=\Psi(\sqrt{x})$ is a Young function and $\tPh$ is the complementary Young function of $\tPs$. Then\\
$(i)$ for every $x>0$,
\begin{eqnarray}\label{Eq:Young functions-square root-comparision I}
\Phi(x)\leq \tPh\left(\frac{2x^2}{\Phi(x)}\right), \ \  \ \tPh\left(\frac{x^2}{4\Phi(x)}\right)\leq \Phi(x).
\end{eqnarray}
$(ii)$ We have the following jointly continuous inclusions:
\begin{eqnarray}\label{Eq:Orlicz spaces-convolution product}
L^{\tPs}(G)*L^{\Phi}(G) \subseteq L^{\Psi}(G),
\end{eqnarray}
\begin{eqnarray}\label{Eq:Orlicz spaces-pointwise product}
L^{\tPh}(G)L^{\Psi}(G) \subseteq L^{\Phi}(G).
\end{eqnarray}
\end{lem}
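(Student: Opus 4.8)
The plan is to prove part $(i)$ first by exploiting the duality between $\Psi$ and $\Phi$ together with the duality between $\tPs$ and $\tPh$, and then to derive part $(ii)$ from part $(i)$ via the generalized Hölder inequality \eqref{Eq:Holder inequality} and the Young inequality \eqref{Eq:Young inequality}. For $(i)$, I would start from the complementarity formula \eqref{Eq:Young function-complementary}: since $\tPh(t)=\sup_{s\ge 0}\{st-\tPs(s)\}=\sup_{s\ge 0}\{st-\Psi(\sqrt s)\}$, substituting $s=r^2$ gives $\tPh(t)=\sup_{r\ge 0}\{r^2 t-\Psi(r)\}$. Fix $x>0$ and evaluate at the specific point $t=2x^2/\Phi(x)$: then $\tPh(2x^2/\Phi(x))\ge r^2\cdot 2x^2/\Phi(x)-\Psi(r)$ for every $r$, and choosing $r$ cleverly (something like $r=\Phi(x)/x$, coming from the equality case in Young's inequality $x\cdot\Phi(x)/x=\Phi(x)+\Psi(\Phi(x)/x)$, which gives $\Psi(\Phi(x)/x)\le\Phi(x)$) should produce the lower bound $\tPh(2x^2/\Phi(x))\ge 2\Phi(x)-\Psi(\Phi(x)/x)\ge\Phi(x)$, which is the first inequality. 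For the second inequality $\tPh(x^2/(4\Phi(x)))\le\Phi(x)$, I would instead bound $\tPh$ from above: $\tPh(t)=\sup_{r\ge 0}\{r^2 t-\Psi(r)\}$, split the supremum according to whether $r\le 2\Phi(x)/x$ or not, use $r^2 t\le 4\Phi(x)^2/x^2\cdot x^2/(4\Phi(x))=\Phi(x)$ on the first range, and on the second range use that $\Psi(r)$ grows at least linearly (more precisely, use Young's inequality $r\cdot(2\Phi(x)/x)\le\Psi(r)+\Phi(2\Phi(x)/x)$ or convexity of $\Psi$) to see the expression $r^2 t-\Psi(r)$ is controlled; the exact constants will need the convexity/monotonicity bookkeeping, but this is the routine part.

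For part $(ii)$, both inclusions follow from $(i)$ by the same pattern. For \eqref{Eq:Orlicz spaces-pointwise product}: given $g\in L^{\tPh}(G)$ and $h\in L^{\Psi}(G)$, I want $gh\in L^\Phi(G)$, i.e. $\int_G\Phi(\alpha|g(s)h(s)|)\,ds<\infty$ for some $\alpha>0$. Using the first inequality of $(i)$ in the form $\Phi(|gh|)\le\tPh(2|gh|^2/\Phi(|gh|))$ is awkward because of the $\Phi(|gh|)$ in the denominator; instead I would argue more directly: by the equality case of Young's inequality applied to $\tPs,\tPh$, for any $u,w\ge 0$ we have $uw\le\tPh(u)+\tPs(w)=\tPh(u)+\Psi(\sqrt w)$, and setting things up so that $\sqrt w$ plays the role of $|h|$ and $u$ the role of (a multiple of) $|g|$ forces $|g||h|=u\sqrt w\le$ something integrable once we know $\tPh(|g|)$ and $\Psi(|h|)$ are integrable (after scaling by suitable $\alpha$). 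Concretely, $\alpha|g(s)h(s)|\le\tPh(\alpha|g(s)|)+\Psi(|h(s)|)$ from $uw\le\tPh(u)+\Psi(\sqrt{w})$ with $u=\alpha|g(s)|$, $w=|h(s)|^2$ — wait, that gives $\alpha|g(s)||h(s)|$ on the left, not $\Phi$ of it; so I would then apply convexity/monotonicity of $\Phi$ to pass from an $L^1$-type bound on the product to membership in $L^\Phi$, or more cleanly invoke the generalized Hölder inequality \eqref{Eq:Holder inequality} for the complementary pair $(\tPh,\tPs)$ to get $gh^{1/?}\in L^1$ and then massage. For \eqref{Eq:Orlicz spaces-convolution product}, I would pair it dually: $L^{\tPs}*L^\Phi\subseteq L^\Psi$ should follow by testing against $L^{\tPh}=(L^{\tPs})^*$-type functionals, using \eqref{Eq:Orlicz spaces-pointwise product}, the inequality $\|k*f\|$ tested on $v$ equals $\int(k*f)v=\int k(\tilde f * v)$ (an adjoint/Fubini argument), and noting $\tilde f*v\in L^{\tPh}$ when $f\in L^\Phi$, $v\in L^{\tPs}$ by the pointwise inclusion after a square-root change — so the convolution inclusion is the pointwise inclusion ``transposed''. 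Joint continuity in both cases is automatic from the closed graph theorem once the set-theoretic inclusion is established, or can be read off from explicit norm bounds along the way.

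The main obstacle I anticipate is bookkeeping the constants in part $(i)$ — in particular getting the clean factors $2$ and $1/4$ rather than unspecified constants — because the arguments rely on choosing near-optimal $r$ in the Legendre-type transform $\tPh(t)=\sup_r\{r^2 t-\Psi(r)\}$ and on the two-sided bound $x\le\Phi^{-1}(x)\Psi^{-1}(x)\le 2x$ from \eqref{Eq:Inverse Young inequality}, and these have to be combined carefully with the (mere) convexity of $\Phi$ and $\Psi$, without assuming differentiability. A secondary subtlety is that $\tPs(x)=\Psi(\sqrt x)$ need not automatically be convex (it is assumed to be a Young function in the hypothesis), so I must be careful to use only the hypothesis and the formal duality, not any concavity of $\sqrt{\cdot}$ beyond what is given. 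Once $(i)$ is in hand with explicit constants, part $(ii)$ is essentially a substitution exercise: the pointwise inclusion is a direct application of the Young-type inequality coming from the pair $(\tPh,\tPs)$, and the convolution inclusion is its dual transpose via Hölder's inequality \eqref{Eq:Holder inequality} and a Fubini argument.
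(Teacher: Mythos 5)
Your outline for part (i) has the right flavor (work with the Legendre transform $\tPh(t)=\sup_{r\ge0}\{r^2t-\Psi(r)\}$), but two points need repair. First, the justification of the auxiliary fact for the lower bound is wrong as stated: $x\cdot\Phi(x)/x=\Phi(x)+\Psi(\Phi(x)/x)$ is not an ``equality case'' of Young's inequality (it would force $\Psi(\Phi(x)/x)=0$); the inequality $\Psi(\Phi(x)/x)\le\Phi(x)$ you need is true, but it must be proved from the definition of $\Psi$ and convexity of $\Phi$ (split the supremum $\sup_s\{s\Phi(x)/x-\Phi(s)\}$ at $s=x$). The genuine gap is the second inequality $\tPh\bigl(x^2/(4\Phi(x))\bigr)\le\Phi(x)$, which is exactly the assertion that $r^2x^2/(4\Phi(x))\le\Psi(r)+\Phi(x)$ for \emph{every} $r>0$. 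Your treatment of the range $r>2\Phi(x)/x$ --- ``use Young's inequality $r\cdot(2\Phi(x)/x)\le\Psi(r)+\Phi(2\Phi(x)/x)$ or convexity of $\Psi$'' --- cannot close this: $\Phi(2\Phi(x)/x)$ is not comparable to $\Phi(x)$ in general, and convexity of $\Psi$ alone is insufficient, since the inequality is actually false when $\tPs$ is not convex (take $\Phi(x)=x^4/4$, $\Psi(y)=\tfrac34y^{4/3}$, $r=cx^3$ with $c$ slightly above $1$). So this is not routine bookkeeping: the hypothesis that $\tPs$ is a Young function must enter precisely here, e.g.\ via monotonicity of $u\mapsto\tPs(u)/u$ together with $\Psi(2\Phi(x)/x)\ge\Phi(x)$, or, as the paper does, by translating (i) into inequalities for inverse functions and combining \eqref{Eq:Inverse Young inequality} for both complementary pairs with $\tPs^{-1}(x)=\Psi^{-1}(x)^2$, which yields $\Phi^{-1}(x)^2/(4x)\le\tPh^{-1}(x)\le2\Phi^{-1}(x)^2/x$.

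Part (ii) is not actually proved in your sketch. For the pointwise inclusion you apply Young's inequality for $(\tPh,\tPs)$ with $w=|h|^2$, observe yourself that it yields a bound on $\alpha|g||h|^2$ rather than on $\Phi(|gh|)$, and then defer to ``massaging''; what is needed is a pointwise estimate of the form $\Phi(|gh|/c)\le\tPh(|g|)+\Psi(|h|)$, which follows from the inverse-function inequality $\tPh^{-1}(t)\Psi^{-1}(t)\le2\Phi^{-1}(t)$ (evaluate at $t=\tPh(|g|)+\Psi(|h|)$), and your outline never produces that inequality. The proposed ``transpose'' argument for the convolution inclusion also does not work as described: to show $k*f\in L^\Psi(G)$ one pairs against test functions $v$ in (a dense subspace of) $L^\Phi(G)$, and the inner factor $\check f*v$ is a \emph{convolution}, about which the pointwise inclusion \eqref{Eq:Orlicz spaces-pointwise product} says nothing; you would need something like $L^\Phi*L^\Phi\subseteq L^{\tPh}$, which you have not established. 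The paper's route is to derive the single inequalities $\tPs^{-1}(x)\Phi^{-1}(x)\le2x\Psi^{-1}(x)$ and $\tPh^{-1}(x)\Psi^{-1}(x)\le2\Phi^{-1}(x)$ and then invoke the O'Neil-type results \cite[Theorems 3.3.9 and 3.3.7]{rao}, absorbing the factor $2$ via strong equivalence \eqref{Eq:equivalent young functions}; without those theorems (or a reproduction of their proofs) your part (ii) has no complete argument, and joint continuity does not come for free from a duality sketch that does not yield the set-theoretic inclusion in the first place.
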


\begin{proof}
Since $\Phi$ and $\Psi$ are strictly increasing, then so are their inverse functions. Hence, it is clear
that inequalities \eqref{Eq:Young functions-square root-comparision I} are equivalent to
\begin{eqnarray}\label{Eq:Young functions-square root-comparision II}
\frac{\tPh^{-1}(x)}{2}\leq \frac{\Phi^{-1}(x)^2}{x},\  \ \ \frac{\Phi^{-1}(x)^2}{x}\leq 4\tPh^{-1}(x)\quad(x>0).
\end{eqnarray}
Since $\tPs(x):=\Psi(\sqrt{x})$, we have that
\begin{eqnarray}\label{Eq:Young functions-square root-comparision III}
\tPs^{-1}(x)=\Psi^{-1}(x)^2  \ \ \ (x\geq 0).
\end{eqnarray}
Combining \eqref{Eq:Inverse Young inequality} and \eqref{Eq:Young functions-square root-comparision III}, we obtain \eqref{Eq:Young functions-square root-comparision II}
and the following inequality:
\begin{equation}\label{Eq:5}
\tPs^{-1}(x) \Phi^{-1}(x) \leq 2 x\Psi^{-1}(x) \ \ \ (x\geq 0).
\end{equation}
According to \cite[Theorem 3.3.9]{rao}, if Young functions $\Phi_1, \,\Phi_2,\,\Phi_3$ are such that
$$\Phi_1^{-1}(x) \Phi_2^{-1}(x) \leq x\Phi_3^{-1}(x) \ \ \ (x\geq 0), $$
then $L^{\Phi_1}*L^{\Phi_2}\subseteq L^{\Phi_3}$, and the embedding is jointly continuous. Because of (\ref{Eq:5}), the above condition is satisfied for $\Phi_1=\tilde{\Psi}$, $\Phi_2=\Phi$, and $\Phi_3(x)=\Psi(x/2)$. So in order to prove (\ref{Eq:Orlicz spaces-convolution product}), we just note that $\Phi_3$ is strongly equivalent to $\Psi$ in the sense of (\ref{Eq:equivalent young functions}), and hence they produce the same Orlicz space.

Finally, using \eqref{Eq:Inverse Young inequality} and (\ref{Eq:Young functions-square root-comparision III}), we can also get

$$\tPh^{-1}(x) \Psi^{-1}(x) \leq \frac{2x\Psi^{-1}(x)}{\tPs^{-1}(x)}=\frac{2x}{\Psi^{-1}(x)}\leq 2\Phi^{-1}(x)\ \ \ (x\ge0),$$
which implies the jointly continuous inclusion
$$L^{\tPh}(G)L^\Psi(G)\subseteq L^\Phi(G)$$
by \cite[Theorem 3.3.7]{rao} and the same kind of argument as above to get rid of the constant $2$.

\end{proof}




\begin{thm}\label{T:weighted Orlicz algebra-non weak amenable}
Let $G$ be a locally compact abelian group, $\om$ be a weight on~$G$, and $(\Phi,\Psi)$ be a complementary pair of Young functions. Suppose that there exists a nonzero continuous group homomorphism $\xi:G\to \C$ such that $\txi \in L^\infty(G)$, where $\txi$ is defined in \eqref{Eq:unbounded group homo vs weak amen}. If $(L^\Phi_\om(G),*)$ is a Banach algebra, then neither $L^\Phi_\om(G)$ nor  $\Sm^\Phi_\om(G)$  is weakly amenable in any of the following cases:\\
$(i)$ $\tPs(x):=\Psi(\sqrt{x})$ is a Young function;\\
$(ii)$ $\tPh(x):=\Phi(\sqrt{x})$ is a Young function and $\txi\in L^{\tPs}$, where $\tPs$ is the complementary Young function of $\tPh$.
\end{thm}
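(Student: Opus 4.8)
The plan is to produce, in each case, a nonzero bounded derivation from $L^\Phi_\om(G)$ into a symmetric Banach $L^\Phi_\om(G)$-module. Since $L^\Phi_\om(G)$ is commutative, every inner derivation into a symmetric module is zero, so exhibiting one such derivation already shows $L^\Phi_\om(G)$ is not weakly amenable (cf.\ \cite[Theorem~2.8.63]{D}); restricting the same derivation to the closed subalgebra $\Sm^\Phi_\om(G)$ — and checking that it stays nonzero there — disposes of $\Sm^\Phi_\om(G)$ as well. For the module I would simply use the dual $L^\Phi_\om(G)^*$, which is automatically a symmetric module over the commutative algebra $L^\Phi_\om(G)$.

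The derivation is the one underlying Zhang's argument for weighted group algebras. Because $\xi$ is additive, $\xi(st)=\xi(s)+\xi(t)$, and splitting this identity inside a convolution integral (exactly as in the classical point-derivation computation) shows that
\[
D(f)(g):=\int_G \xi(s)\,f(s^{-1})\,g(s)\,ds \qquad (f,g\in L^\Phi_\om(G))
\]
satisfies $D(f*g)=f\cdot D(g)+D(f)\cdot g$ for the natural (symmetric) action on $L^\Phi_\om(G)^*$. Thus the whole content of the theorem is that $D$ is \emph{well defined and bounded}. Writing $\xi(s)=\txi(s)\,\om(s)\om(s^{-1})$ and using $\om(e)=1$ we get $|\xi(s)f(s^{-1})g(s)|=|\txi(s)|\,|(\om f)(s^{-1})|\,|(\om g)(s)|$, and since $G$ is abelian (so its Haar measure is inversion-invariant) the function $s\mapsto(\om f)(s^{-1})$ has the same Orlicz norm as $\om f\in L^\Phi(G)$. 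Hence boundedness of $D$ reduces to estimating $\int_G|\txi|\,|a|\,|b|$ with $a,b\in L^\Phi(G)$, which is precisely where Lemma~\ref{L:Young functions-square root} enters.

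In case $(ii)$, where $\tPh(x)=\Phi(\sqrt x)$ is a Young function with complementary function $\tPs$, I would apply Lemma~\ref{L:Young functions-square root} with the roles of $\Phi$ and $\Psi$ interchanged, which yields the jointly continuous pointwise inclusion $L^{\tPs}(G)\,L^\Phi(G)\subseteq L^\Psi(G)$. Since $\txi\in L^{\tPs}(G)$ by hypothesis, this puts $|\txi|\,|(\om f)(\cdot^{-1})|$ in $L^\Psi(G)$ with norm $\lesssim\|\txi\|_{\tPs}\,\|f\|_{\Phi,\om}$; pairing it against $\om g\in L^\Phi(G)$ via the generalized H\"older inequality \eqref{Eq:Holder inequality} gives $|D(f)(g)|\lesssim\|\txi\|_{\tPs}\,\|f\|_{\Phi,\om}\,\|g\|_{\Phi,\om}$, so $D\colon L^\Phi_\om(G)\to L^\Phi_\om(G)^*$ is a bounded derivation. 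In case $(i)$, where only $\tPs(x)=\Psi(\sqrt x)$ is assumed to be a Young function, I would first observe that this hypothesis already forces $\Phi\in\Delta_2$ — it makes $y\mapsto\Psi(y)/y^2$ nondecreasing, hence $\Psi$ a $\nabla_2$-function, hence $\Phi\in\Delta_2$ — so that $L^\Phi_\om(G)=\Sm^\Phi_\om(G)$ and the two assertions of the theorem coincide. Then, using the companion \emph{convolution} inclusion $L^{\tPs}(G)*L^\Phi(G)\subseteq L^\Psi(G)$ from Lemma~\ref{L:Young functions-square root}, together with the weight inequality $\om(st)\le C\om(s)\om(t)$ and the standing hypothesis that $L^\Phi_\om(G)$ is a Banach algebra, one builds the required symmetric module — an appropriately weighted Orlicz space on which $L^\Phi_\om(G)$ acts by convolution — and checks that $D$ maps into it boundedly (here $\txi\in L^\infty(G)$ suffices for the estimates).

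Finally $D\ne0$: the nonzero continuous homomorphism $\xi$ is nonvanishing on some nonempty open set, so for suitable $f,g\in\Sm^\Phi_\om(G)$ supported near a point where $\xi\ne0$ the integral $\int_G\xi(s)f(s^{-1})g(s)\,ds$ is nonzero (e.g.\ choosing $g$ with $g(s^{-1})=\overline{\xi(s)f(s)}$ makes it equal to $-\int_G|\xi(s)f(s)|^2\,ds$), and the same pair witnesses $D|_{\Sm^\Phi_\om(G)}\ne0$. The step I expect to be the main obstacle is the construction and verification of the symmetric module in case $(i)$: the inclusions supplied by Lemma~\ref{L:Young functions-square root} do not, on their own, close up into a single weighted Orlicz space under the convolution action, so one must combine them carefully with the submultiplicativity of $\om$ and with the Banach-algebra hypothesis on $L^\Phi_\om(G)$ in order to pin down a genuine $L^\Phi_\om(G)$-module into which $D$ still maps boundedly.
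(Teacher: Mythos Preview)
Your treatment of case~(ii) is essentially the paper's: both arguments reduce to the pointwise inclusion $L^{\tPs}(G)\,L^\Phi(G)\subseteq L^\Psi(G)$ of Lemma~\ref{L:Young functions-square root} (with the roles of $\Phi,\Psi$ swapped) and then to H\"older's inequality. So far, no objection.

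The genuine gap is in case~(i). The derivation you write down, $D(f)=\xi\check f$ (equivalently $D(f)(g)=\int_G\txi\,(\om f)^\vee(\om g)$), is \emph{not} the one underlying Zhang's argument and, more to the point, it is not bounded into $L^\Phi_\om(G)^*$ under hypothesis~(i) alone. Indeed, with only $\txi\in L^\infty(G)$ your estimate collapses to the requirement $L^\Phi(G)\cdot L^\Phi(G)\subseteq L^1(G)$; by \eqref{Eq:Young functions-square root-comparision II} this would force $\tPh^{-1}$ to be bounded, which it never is. Concretely, for $\Phi(x)=x^p/p$ with $1<p<2$ (the very range where (i) applies) one has $L^p\cdot L^p\subseteq L^{p/2}\not\subseteq L^1$, so your pairing diverges. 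Your proposed fix---replacing the target module by ``an appropriately weighted Orlicz space''---is left unspecified, and you yourself flag it as the main obstacle.

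The paper resolves this not by changing the module but by changing the derivation: following Zhang, it sets
\[
D(f)=1_U*(\check f\,\check\xi),
\]
with $U$ a fixed compact neighbourhood of $e$. This smoothing by $1_U$ is precisely what converts the pointwise-product estimate you are stuck with into a \emph{convolution} estimate: after inserting $\om(s^{-1})\le C\,\om((ts)^{-1})\om(t)$ one is led to the pairing of $f\om\,\txi\in L^\Phi(G)$ against $1_U\check\om*|\check g\check\om|$, and now the convolution inclusion $L^{\tPs}*L^\Phi\subseteq L^\Psi$ of Lemma~\ref{L:Young functions-square root} applies because the compactly supported $1_U\check\om$ lies in $L^{\tPs}(G)$. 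No auxiliary module is needed; the target remains $L^\Phi_\om(G)^*$ throughout. The missing idea in your plan is exactly this $1_U$-convolution.
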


\begin{proof}
   Let $\xi:G\to \C$ be a nonzero continuous group homomorphism such that $\txi \in L^\infty(G)$, where $\txi$ is defined in \eqref{Eq:unbounded group homo vs weak amen}.
Let $U$ be a compact neighborhood of the identity of $G$. For every $f\in C_c(G)$, define
$$D(f)=1_U*(\check{f}\check{\xi}),$$
where, as usually, $\check{g}(x)=g(x^{-1})$ $(x\in G)$. It is easy to see that $D$ is a linear operator that ranges in $L^\Phi_{\om}(G)^*$. Moreover, a similar argument to the one presented in \cite[Theorem 3.1]{Z} shows that $D$ is a non-zero derivation on $C_c(G)$. Hence it suffices to show that $D$ can be extended to a bounded linear operator on  $L_\om^\Phi(G)$.

For every $g\in L_\om^\Phi(G)$, we have
\begin{eqnarray*}
\la D(f) \,, \,g \ra &=& \int_G\int_G 1_U(ts)f(s)\xi(s)g(t) dsdt \\
&=&  \int_G\int_G \frac{1_U(ts)\om(s^{-1})}{\om(t)}(f\om)(s)\txi(s)(g\om)(t) dsdt.
\end{eqnarray*}
Applying the inequality $\om(s^{-1})\leq C \om(s^{-1}t^{-1})\om(t)$ (see \eqref{Eq:defn-weight}) and the Fubini's theorem, we get
\begin{eqnarray*}
|\la D(f) \,, \,g \ra| &\leq &  C\int_G\int_G 1_U(ts)\om((ts)^{-1})|f\om|(s)|\txi(s)||g\om|(t) dtds \\
&=& C\,\la 1_U\check{\om}*|\check{g}\check{\om}| \ , \, |f\om\txi| \ra.
\end{eqnarray*}
Now, if (i) holds, then by \eqref{Eq:Orlicz spaces-convolution product} of Lemma~\ref{L:Young functions-square root} we have

$$L^{\tPs}(G)* L^\Phi(G)\subseteq L^\Psi(G).$$
Since $f\om,\, \check{g}\check{\om} \in L^\Phi(G)$, $\txi \in L^\infty(G)$, and $1_U\check{\om} \in L^{\tPs}(G)$, it follows that  $1_U\check{\om}*|\check{g}\check{\om}|\in L^\Psi(G)$ and $f\om\txi \in L^\Phi(G)$, so that
$$ |\la D(f) \,, \,g \ra| \leq C_1 \|1_U\|_{\tPs,\check{\om}} \|\txi\|_\infty \|g\|_{\Phi,\om}\|f\|_{\Phi,\om}.$$
Now suppose that (ii) holds. By \eqref{Eq:Orlicz spaces-pointwise product} of Lemma~\ref{L:Young functions-square root} we have that
$$L^{\tPs}(G)L^\Phi(G)\subseteq L^\Psi(G).$$
Since $f\om,\, \check{g}\check{\om} \in L^\Phi(G)$, $1_U\check{\om}\in L^1(G)$, and $\txi  \in L^{\tPs}(G)$, it follows that  $1_U\check{\om}*|\check{g}\check{\om}|\in L^\Phi(G)$ and $f\om\txi \in L^\Psi(G)$, so that
$$ |\la D(f) \,, \,g \ra| \leq C_2 \|1_U\|_{1,\check{\om}} \|\txi\|_{\tPs} \|g\|_{\Phi,\om}\|f\|_{\Phi,\om}.$$
Thus, in either case, $D$ has a continuous extension to a linear operator from $L^\Phi_\om(G)$ to $L^\Phi_{\om}(G)^*$, and so neither $L^\Phi_\om(G)$ nor $\Sm_\om^\Phi(G)$ is weakly amenable.
\end{proof}

\begin{thm}\label{T:weighted Orlicz algebra-non weak amenable-compactly generated group}
Let $G$ be a non-compact compactly generated abelian group, $(\Phi,\Psi)$ be a complementary pair of Young functions, and $\om$ be the weight on $G$ defined in \eqref{Eq:weight-lenght function}. Suppose that $(L^\Phi_\om(G),*)$ is a Banach algebra and $1/\om \in L^\Psi(G)$. If $\tPh(x):=\Phi(\sqrt{x})$ is a Young function, then neither $L^\Phi_\om(G)$ nor  $\Sm^\Phi_\om(G)$  is weakly amenable.
\end{thm}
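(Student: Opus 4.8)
The plan is to derive this from Theorem~\ref{T:weighted Orlicz algebra-non weak amenable}(ii). Since $(L^\Phi_\om(G),*)$ is a Banach algebra and $\tPh(x):=\Phi(\sqrt x)$ is a Young function by hypothesis, it suffices to produce a nonzero continuous group homomorphism $\xi\from G\to\C$ such that $\txi$ (defined in \eqref{Eq:unbounded group homo vs weak amen}) lies in $L^\infty(G)\cap L^{\tPs}(G)$, where $\tPs$ is the complementary Young function of $\tPh$; Theorem~\ref{T:weighted Orlicz algebra-non weak amenable}(ii) then gives at once that neither $L^\Phi_\om(G)$ nor $\Sm^\Phi_\om(G)$ is weakly amenable.

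\emph{Construction of $\xi$ and first reductions.} As $G$ is non-compact and compactly generated abelian, \eqref{Eq:Compactly gen abelian group-structure thm} gives $G\cong\R^k\times\Z^m\times T$ with $k+m\geq1$. Composing the coordinate projection onto the first non-compact factor with the inclusion $\R\hookrightarrow\C$ (or $\Z\hookrightarrow\C$) yields a nonzero continuous homomorphism $\xi\from G\to\C$ with $|\xi|\leq1$ on the generating set $U$ of \eqref{Eq:Compactly gen abelian group-generating set}; since $\xi$ is additive, $|\xi(s)|\leq|s|_U$ for every $s\in G$. Put $D:=k+m\geq1$. From the description of $U^n$ one has $|U^n|\asymp n^D$, so the spheres $\{s:|s|_U=n\}$ have measure $\asymp n^{D-1}$ for $n\geq1$. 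Because $\om(s)=e^{\nu(|s|_U)}$ depends only on $|s|_U$ and $|s|_U=|s^{-1}|_U$ (see \eqref{Eq:lenght func-trai equality}), we have $\om(s)\om(s^{-1})=e^{2\nu(|s|_U)}$, so
\[
|\txi(s)|\ \leq\ |s|_U\,e^{-2\nu(|s|_U)}\qquad(s\in G).
\]
Comparing Orlicz integrals with the sphere count, the hypothesis $1/\om\in L^\Psi(G)$ translates into $\sum_{n\geq1}n^{D-1}a_n<\infty$ for some $\alpha>0$, where $a_n:=\Psi(\alpha e^{-\nu(n)})$; since $D\geq1$ this gives $\sum_n a_n<\infty$, and as $(a_n)$ is nonincreasing it follows that $na_n\to0$.

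\emph{The key estimate.} We have $\tPh^{-1}(x)=\Phi^{-1}(x)^2$, so applying \eqref{Eq:Inverse Young inequality} to the pairs $(\tPh,\tPs)$ and $(\Phi,\Psi)$ gives
\[
\tPs^{-1}(x)\ \geq\ \frac{x}{\Phi^{-1}(x)^2}\ \geq\ \frac{\Psi^{-1}(x)^2}{4x}\qquad(x>0),
\]
i.e. $\Psi^{-1}(x)^2\leq4x\,\tPs^{-1}(x)$; this is the analogue of the inequalities in Lemma~\ref{L:Young functions-square root}(i) under the interchange of $\Phi$ and $\Psi$. On the sphere $|s|_U=n$, writing $\alpha e^{-\nu(n)}=\Psi^{-1}(a_n)$ we obtain
\[
|\txi(s)|\ \leq\ \frac{n}{\alpha^2}\,\Psi^{-1}(a_n)^2\ \leq\ \frac{4}{\alpha^2}\,n\,a_n\,\tPs^{-1}(a_n).
\]
Here $na_n$ is bounded (indeed $\to0$) and $\tPs^{-1}(a_n)\leq\tPs^{-1}(a_1)$ since $\tPs^{-1}$ is increasing, so $\txi\in L^\infty(G)$. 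For the $L^{\tPs}$-membership, set $\beta:=\alpha^2/4$; for all sufficiently large $n$ one has $na_n\leq1$, so the convexity inequality $\tPs(\lambda t)\leq\lambda\tPs(t)$ ($0\leq\lambda\leq1$) gives $\tPs\big(\beta|\txi(s)|\big)\leq na_n\,\tPs(\tPs^{-1}(a_n))=na_n^2$ on that sphere. Integrating and using the sphere estimate,
\[
\int_G\tPs\big(\beta|\txi(s)|\big)\,ds\ \leq\ C_0+C\sum_{n}(na_n)\,(n^{D-1}a_n)\ \leq\ C_0+C\Big(\sup_n na_n\Big)\sum_n n^{D-1}a_n\ <\ \infty,
\]
whence $\beta\txi$, and therefore $\txi$, lies in $L^{\tPs}(G)$ by \eqref{Eq:Orlicz defn}. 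With $\txi\in L^\infty(G)\cap L^{\tPs}(G)$ established, Theorem~\ref{T:weighted Orlicz algebra-non weak amenable}(ii) completes the proof.

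\emph{Expected main difficulty.} The crux is the final estimate: one must trade the linear growth of $\xi$ against the polynomial growth of $G$ with only $1/\om\in L^\Psi(G)$ as leverage, and what makes this work is, first, that $D=k+m\geq1$ already forces $na_n\to0$, and second, that the convexity of $\tPs$ lets one replace the inconvenient factor $\tPs^{-1}(a_n)$ by $a_n$ inside $\tPs$. A minor point to dispatch is that $\tPs$ be a genuine (continuous, strictly increasing) Young function so that \eqref{Eq:Inverse Young inequality} applies to the pair $(\tPh,\tPs)$; this fails only when $\tPh$ is asymptotically affine, i.e. $\Phi\approx x^2$ and $L^\Phi_\om(G)\approx L^2_\om(G)$, a borderline case handled by passing to a strongly equivalent Young function.
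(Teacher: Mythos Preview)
Your argument is correct and follows essentially the same route as the paper: both reduce to Theorem~\ref{T:weighted Orlicz algebra-non weak amenable}(ii) by showing $\txi\in L^{\tPs}(G)$, both rely on the $\Phi\leftrightarrow\Psi$-swapped version of the inequality in Lemma~\ref{L:Young functions-square root}(i), and both extract the boundedness of $na_n$ from $\sum a_n<\infty$ with $(a_n)$ decreasing. The paper organizes the final step as the factorization $\txi=\alpha^{-2}\,(\xi\,\Psi\!\circ\! f)\cdot(f^2/\Psi\!\circ\! f)\in L^\infty(G)\cdot L^{\tPs}(G)$ rather than your explicit sphere-by-sphere estimate, but this is only a cosmetic difference.
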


\begin{proof}
Let $\xi:G\to \C$ be any nonzero continuous group homomorphism (its existence follows, for example, from the structural
description of $G$) and let $\txi$ be the function defined in \eqref{Eq:unbounded group homo vs weak amen}. By Theorem \ref{T:weighted Orlicz algebra-non weak amenable}, it suffices to prove that $\txi\in L^{\tPs}(G)$.
Our assumption that $1/\om \in L^\Psi(G)$ means that there exists $\alpha>0$ such that
\begin{align}\label{Eq:1}
\int_G \Psi\left(\frac{\alpha}{\om(s)}\right)ds < \infty.
\end{align}
Now put
\begin{align}\label{Eq:2}
f(s)=\frac{\alpha}{\om(s)}:=\frac{\alpha}{e^{\nu(|s|)}} \ \ \ (s\in G).
\end{align}
We first prove that
\begin{align}\label{Eq:3}
\xi (\Psi\circ f) \in L^\infty(G).
\end{align}
Since $\xi$ is a group homomorphism, for every $n\in \N$ and $s\in U^n \setminus U^{n-1}$, we have
\begin{align}\label{Eq:4}
\xi(s) \Psi(f(s)) \leq Cn a_n,
\end{align}
where
$$C=\sup \{\xi(s): s\in U \} \ \ \text{and} \ \ a_n=\Psi\Biggl(\frac{\alpha}{e^{\nu(n)}}\Biggr).$$
On the other hand, $\nu$ and $\Psi$ are both increasing so that $\{a_n\}_{n\in \N}$ is a decreasing sequence of positive numbers. Therefore, if the Haar measure of $G$ is denoted by $\lambda$ and we assume that $\lambda(U)=1$, then
\begin{eqnarray*}
C^{-1}\xi(s) \Psi(f(s)) &\leq& n a_n \ \ \ (\text{by}\ \eqref{Eq:4}) \\
&\leq& a_1+\cdots+ a_n \\
&\leq &  \sum_{n=1}^\infty \displaystyle \lambda(U^n \setminus U^{n-1})a_n \\ 
&=& \sum_{n=1}^\infty \int_{U^n\setminus U^{n-1}} \Psi(f(t)) dt \\
&\leq& \int_{G} \Psi(f(t)) dt <  \infty \ \ \ (\text{by}\ \eqref{Eq:1}\ \text{and}\ \eqref{Eq:2}).
\end{eqnarray*}
Hence, \eqref{Eq:3} is verified. Next, since $\tPh(x):=\Phi(\sqrt{x})$ is a Young function with the complementary function $\tPs$, it follows from \eqref{Eq:Young functions-square root-comparision I} of Lemma \ref{L:Young functions-square root}, \eqref{Eq:1}, and \eqref{Eq:2} that
$$\int_G \tPs\left(\frac{f(t)^2}{4\Psi(f(t))}\right) dt \leq \int_G \Psi(f(t)) dt<\infty.$$
Hence, by definition \eqref{Eq:Orlicz defn}, $\frac{f^2}{\Psi(f)}\in L^{\tPs}(G)$. This, together with \eqref{Eq:3}, implies that
$$\xi f^2=\xi \Psi(f)\frac{f^2}{\Psi(f)}\in L^\infty(G)L^{\tPs}(G)\subseteq L^{\tPs}(G).$$
Finally, $\txi\in L^{\tPs}(G)$ since $$\txi=\frac1{\alpha^2} \xi f^2.$$
\end{proof}

We can summarize the preceding results and also provide an equivalent and easy-to-check criterion on Young functions so that Theorems \ref{T:weighted Orlicz algebra-non weak amenable} and \ref{T:weighted Orlicz algebra-non weak amenable-compactly generated group} may be applied.

\begin{cor}\label{C:weighted lp algebra-(non) weak amenable1}
Let $G$ be a non-compact, compactly generated abelian group, $\om$ be a weight on $G$, and $(\Phi,\Psi)$ be a complementary pair of Young functions such that $\Psi'$ exists on $\R^+$ and $\Psi'(x)/x$ is increasing on $\R^+$.\\
$(i)$ If $(L^\Phi_\om(G),*)$ is a Banach algebra, then  $\Sm^\Phi_\om(G)$  is weakly amenable if and only if there exists no nonzero continuous group
homomorphism $\xi:G\to \C$ such that $\txi \in L^\infty(G)$, where
\begin{equation*}\label{Eq:group homo vs weak amen}
\txi(s):=\frac{\xi(s)}{\om(s)\om(s^{-1})} \ \ \ \ (s\in G).
\end{equation*}
$(ii)$ If $(L^\Psi_\om(G),*)$ is a Banach algebra, $1/\om \in L^\Phi(G)$, and $\om$ is of the form \eqref{Eq:weight-lenght function}, then neither $L^\Psi_\om(G)$ nor  $\Sm^\Psi_\om(G)$ is weakly amenable.
\end{cor}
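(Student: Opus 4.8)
The plan is to derive Corollary~\ref{C:weighted lp algebra-(non) weak amenable1} directly from Theorems~\ref{T:weighted Orlicz algebra-weak amenable}, \ref{T:weighted Orlicz algebra-non weak amenable}, and \ref{T:weighted Orlicz algebra-non weak amenable-compactly generated group} by verifying a single analytic fact about the Young function $\Psi$: under the hypothesis that $\Psi'$ exists on $\R^+$ and $x\mapsto\Psi'(x)/x$ is increasing, the function $\tilde\Psi(x):=\Psi(\sqrt{x})$ is again a Young function. Once this is in hand, part~(i) is immediate: the ``only if'' direction (nonexistence of $\xi$ implies weak amenability of $\Sm^\Phi_\om(G)$) is exactly Theorem~\ref{T:weighted Orlicz algebra-weak amenable}, and the ``if'' direction (existence of such $\xi$ rules out weak amenability, hence the contrapositive gives necessity) is case~(i) of Theorem~\ref{T:weighted Orlicz algebra-non weak amenable}, which requires precisely that $\tilde\Psi$ be a Young function. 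Since $L^\Phi_\om(G)$ being a Banach algebra makes $\Sm^\Phi_\om(G)$ a closed subalgebra, both statements are about the same object and the equivalence follows.

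For part~(ii) I would apply Theorem~\ref{T:weighted Orlicz algebra-non weak amenable-compactly generated group} with the roles of $\Phi$ and $\Psi$ interchanged: there the standing hypotheses are that $(L^\Psi_\om(G),*)$ is a Banach algebra, $1/\om\in L^\Phi(G)$, that $\om$ has the form \eqref{Eq:weight-lenght function}, and that $\widetilde{(\text{the first function})}(x)=\Psi(\sqrt{x})$ is a Young function. But ``$\Phi(\sqrt{x})$ is a Young function'' in the statement of Theorem~\ref{T:weighted Orlicz algebra-non weak amenable-compactly generated group} becomes, after the swap $\Phi\leftrightarrow\Psi$, exactly ``$\Psi(\sqrt{x})$ is a Young function''—which is the fact established for part~(i). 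So the whole corollary rests on that one lemma-level computation, applied once as stated and once with $\Phi,\Psi$ swapped.

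It remains to explain why the monotonicity of $\Psi'(x)/x$ forces $\tilde\Psi(x)=\Psi(\sqrt{x})$ to be convex (the conditions $\tilde\Psi(0)=0$, $\tilde\Psi$ nonzero, $\tilde\Psi\to\infty$ being clear from the corresponding properties of $\Psi$ together with strict monotonicity). The clean way is to differentiate: $\tilde\Psi'(x)=\Psi'(\sqrt{x})\cdot\frac{1}{2\sqrt{x}}=\tfrac12\cdot\frac{\Psi'(\sqrt{x})}{\sqrt{x}}$, and since $t\mapsto\Psi'(t)/t$ is increasing and $x\mapsto\sqrt{x}$ is increasing, the composition $x\mapsto\frac{\Psi'(\sqrt{x})}{\sqrt{x}}$ is increasing; hence $\tilde\Psi'$ is nondecreasing, so $\tilde\Psi$ is convex. (Strictly speaking one should note $\Psi'(0^+)=0$ or handle the behaviour near $0$ so that $\tilde\Psi'$ is genuinely the derivative of a convex function on all of $[0,\infty)$; since $\Psi$ is a continuous Young function that is the integral of $\Psi'$, and $\Psi'(x)/x$ increasing with $\Psi(0)=0$ forces $\Psi'(0^+)=0$, this is routine.) I would then remark that this hypothesis is easy to check in practice—e.g. for $\Psi(y)=y^q/q$ one has $\Psi'(y)/y=y^{q-2}$, increasing exactly when $q\ge 2$, which recovers the expected range for $l^p_\om(\Z^d)$.

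I do not anticipate a serious obstacle here: the corollary is a bookkeeping consequence of the three theorems, and the only real content is the convexity check above, which is a one-line differentiation argument. The mildest subtlety—and the step I would be most careful with—is the $\Phi\leftrightarrow\Psi$ symmetry in part~(ii): one must confirm that all hypotheses of Theorem~\ref{T:weighted Orlicz algebra-non weak amenable-compactly generated group} translate correctly under the swap (in particular that ``$(\Phi,\Psi)$ a complementary pair'' is symmetric, which it is by the remarks following \eqref{Eq:Young function-complementary}), so that the condition we verified for $\Psi$ is indeed the one the theorem needs. No new estimates, cocycle conditions, or group-theoretic input are required beyond what the cited results already supply.
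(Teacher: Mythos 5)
Your proposal is correct and follows the paper's own argument essentially verbatim: the paper likewise proves that $\tilde\Psi(x)=\Psi(\sqrt{x})$ is convex by noting $\tilde\Psi'(x)=\Psi'(\sqrt{x})/(2\sqrt{x})$ is increasing, then deduces part (i) from Theorems~\ref{T:weighted Orlicz algebra-weak amenable} and \ref{T:weighted Orlicz algebra-non weak amenable}(i) and part (ii) from Theorem~\ref{T:weighted Orlicz algebra-non weak amenable-compactly generated group} applied with the roles of $\Phi$ and $\Psi$ interchanged. No gaps; your extra remarks on the behaviour near $0$ and on the symmetry of complementarity are harmless refinements of the same route.
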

\begin{proof}
Note that if $\tilde{\Psi}(x)=\Psi(\sqrt x)$, then $\tilde{\Psi}'(x)=\frac{\Psi'(\sqrt x)}{2\sqrt x}$ is increasing because we assumed that ${\Psi'(x)}/{x}$ is increasing. Therefore, $\tilde{\Psi}$ is convex and hence it is a Young function. We then immediately obtain (ii) from Theorem~\ref{T:weighted Orlicz algebra-non weak amenable-compactly generated group} and one direction of (i)~--- from Theorem~\ref{T:weighted Orlicz algebra-non weak amenable}(i). The other direction of (i) follows from Theorem~\ref{T:weighted Orlicz algebra-weak amenable}.
\end{proof}

\begin{rem}
It is straightforward to verify that Corollary~\ref{C:weighted lp algebra-(non) weak amenable1} can be applied to various Young functions to determine the weak amenability of their weighted Orlicz algebras. Below, we give few families of examples of such Young functions ($p\geq 1$ is arbitrary):

(1) $\Psi(x)=[x^2\ln (1+x)]^p$.

(2) $\Psi(x)=(e^x-x-1)^p$.

(3) $\Psi(x)=(e^{x^p}-1)$

(4) $\Psi(x)=(\cosh x-1)^p$.

\end{rem}

We finish this section by highlighting, in the following corollary and the example afterward, that when weighted Orlicz spaces are just weighted $L^p$ spaces, we have fairly general results on their weak amenability.

\begin{cor}\label{C:weighted lp algebra-(non) weak amenable}
Let $G$ be a non-compact compactly generated abelian group, $\om$ be a weight on $G$, and $1<p,\,q<\infty$ be such that $1/p+1/q=1$. Suppose that $(L^p_\om(G),*)$ is a Banach algebra. \\
$(i)$ For $1<p\le 2$, $L^p_\om(G)$ is weakly amenable if and only if there exists no nonzero continuous group
homomorphism $\xi:G\to \C$ such that $\txi \in L^\infty(G)$, where
\begin{equation*}\label{Eq:group homo vs weak amen}
\txi(s):=\frac{\xi(s)}{\om(s)\om(s^{-1})} \ \ \ \ (s\in G).
\end{equation*}
$(ii)$ For $p>2$, if $\om$ is of the form \eqref{Eq:weight-lenght function} and $1/\omega\in L^q(G)$, then
$L^p_\om(G)$ is not weakly amenable.
\end{cor}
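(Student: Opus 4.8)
The plan is to derive both parts directly from Corollary~\ref{C:weighted lp algebra-(non) weak amenable1} by realizing $L^p_\om(G)$ as a weighted Orlicz algebra attached to the standard conjugate pair of power Young functions. Recall from example~(1) of Section~2 that for $1<p<\infty$ the function $\Phi(x)=x^p/p$ is a strictly increasing continuous Young function whose complementary function is $\Psi(x)=x^q/q$, and that $L^\Phi(G)=L^p(G)$ with equivalent norms; hence $L^\Phi_\om(G)=L^p_\om(G)$ as Banach algebras, so weak amenability holds for one if and only if it holds for the other. Both $x^p/p$ and $x^q/q$ are differentiable on $\R^+$, which is what makes Corollary~\ref{C:weighted lp algebra-(non) weak amenable1} applicable.

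For part~(i), assume $1<p\le 2$, so that $q\ge 2$, and take $\Phi(x)=x^p/p$, $\Psi(x)=x^q/q$. Then $\Psi'(x)=x^{q-1}$ exists on $\R^+$ and $\Psi'(x)/x=x^{q-2}$ is increasing precisely because $q\ge 2$, so $(\Phi,\Psi)$ satisfies the standing hypothesis of Corollary~\ref{C:weighted lp algebra-(non) weak amenable1}. Since $(L^p_\om(G),\tw)=(L^\Phi_\om(G),\tw)$ is a Banach algebra by assumption, part~(i) of that corollary yields that $\Sm^\Phi_\om(G)$ is weakly amenable if and only if there is no nonzero continuous group homomorphism $\xi\colon G\to\C$ with $\txi\in L^\infty(G)$. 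To conclude, I would identify $\Sm^\Phi_\om(G)$ with $L^p_\om(G)$: the Young function $\Phi(x)=x^p/p$ satisfies the $\Delta_2$-condition (with constant $2^p$), whence $L^\Phi(G)=\Sm^\Phi(G)$, and therefore, by the very definitions of the weighted spaces, $L^\Phi_\om(G)=\Sm^\Phi_\om(G)$. This gives~(i).

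For part~(ii), assume $p>2$. Now it is the function defining $L^p$ that must play the role of $\Psi$ in Corollary~\ref{C:weighted lp algebra-(non) weak amenable1}: set $\Psi(x)=x^p/p$, so that $L^\Psi_\om(G)=L^p_\om(G)$, and $\Phi(x)=x^q/q$, so that $L^\Phi(G)=L^q(G)$, where now $q<2$. Then $\Psi'(x)=x^{p-1}$ exists on $\R^+$ and $\Psi'(x)/x=x^{p-2}$ is increasing because $p>2$, so $(\Phi,\Psi)$ again meets the standing hypothesis. The assumptions that $(L^p_\om(G),\tw)$ is a Banach algebra, that $1/\om\in L^q(G)=L^\Phi(G)$, and that $\om$ has the form \eqref{Eq:weight-lenght function} are exactly the hypotheses of part~(ii) of Corollary~\ref{C:weighted lp algebra-(non) weak amenable1}; applying it shows that neither $L^\Psi_\om(G)=L^p_\om(G)$ nor $\Sm^\Psi_\om(G)$ is weakly amenable.

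I do not expect a genuine obstacle: the whole content is the bookkeeping of which of the two conjugate exponents (equivalently, which of $x^p/p$ and $x^q/q$) has to be substituted for $\Psi$ in Corollary~\ref{C:weighted lp algebra-(non) weak amenable1}, and this is exactly what forces the dichotomy at $p=2$, since $\Psi'(x)/x$ is increasing for the $L^p$-function precisely when $p\ge 2$. The only other small point to watch is the $\Delta_2$-argument in~(i) needed to replace the maximal essential subalgebra $\Sm^\Phi_\om(G)$ by $L^p_\om(G)$ itself.
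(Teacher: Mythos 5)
Your proposal is correct and follows essentially the same route as the paper: both parts are obtained from Corollary~\ref{C:weighted lp algebra-(non) weak amenable1} by taking $\Phi(x)=x^p/p$, $\Psi(x)=x^q/q$ when $1<p\le 2$ and swapping the roles ($\Psi(x)=x^p/p$, $\Phi(x)=x^q/q$) when $p>2$, with the $\Delta_2$-condition used to identify $\Sm^\Phi_\om(G)$ with $L^p_\om(G)$ in part (i), exactly as in the paper's proof.
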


\begin{proof} As was mentioned in the Preliminaries, $L^p_\om(G)=L^{\Phi}_{\omega}(G)$ for $\Phi(x)={x^p}/{p}$ and the corresponding complementary Young function is $\Psi(x)=x^q/q$. Since $\Phi\in\Delta_2$, we have that $L^p_\om(G)=L^{\Phi}_{\omega}(G)=\Sm^{\Phi}_{\omega}(G)$. For $1<p\le2$ we have $q\ge2$ implying that $\Psi'(x)/x=x^{q-2}$ is increasing, and so (i) follows directly from Corollary~\ref{C:weighted lp algebra-(non) weak amenable1}(i).

Likewise, to obtain (ii), we apply Corollary~\ref{C:weighted lp algebra-(non) weak amenable1}(ii) to $\Phi(x)={x^q}/{q}$ and $\Psi(x)=x^p/p$.
\end{proof}

\begin{exm}\label{E:twisted lp alg on integers-poly and exp weight-Operator alg}
Let $\Z^d$ be the group of $d$-dimensional integers.
The standard choice of generating set for $\Z^d$ is
	$$F=\{(x_1,\ldots, x_d) \mid x_i\in \{-1,0,1\} \},$$
It is straightforward to verify that
\begin{align}\label{Eq:growth generating set of integers}
|F^n|=(2n+1)^d \ \ \ (n=0,1,2,\ldots).
\end{align}
Now suppose $1< p,q<\infty$ with $\frac{1}{p}+\frac{1}{q}=1$. Let $\om_\beta$ be the polynomial weight on $\Z^d$ defined in \eqref{Eq:poly weight-defn}. Then
\begin{align}\label{Eq:poly weight-d dim integers}
\om_\beta(\mathbf{x})=(1+\max\{|x_i|:i=1,\ldots,d\})^\beta \ \ \ (\mathbf{x}=(x_1,\ldots,x_d)\in \Z^d).
\end{align}
Hence, by Theorem \ref{T:twisted Orlicz alg-poly and exp weight-Poly growth} and \cite[Theorem 3.2]{K3}, $l^p_{\om_\beta}(\Z^d)$ is a Banach algebra if and only if $1/\om_\beta \in l^q(\Z^d)$ and, by \eqref{Eq:growth generating set of integers}, the latter happens exactly when $\beta >d/q$.
Therefore, by Corollary \ref{C:weighted lp algebra-(non) weak amenable}, $l^p_{\om_\beta}(\Z^d)$ is weakly amenable if $1<p<2$ and
$d/q <\beta <1/2$
 and $l^p_{\om_\beta}(\Z^d)$ is not weakly amenable if $p> 2$ and $\beta >d/q$.
On the other hand, if we let $\om$ to be either of the subexponential weights
defined in \eqref{Eq:Expo weight-defn} or \eqref{Eq:Expo weight II-defn}, then $\txi\in l^\infty(\Z^d)$ (defined in \eqref{Eq:group homo vs weak amen}) for any nonzero group homomorphism $\xi: \Z^d \to \C$ . Thus, again by applying Corollary \ref{C:weighted lp algebra-(non) weak amenable}, it follows that $l^p_\om(\Z^d)$ is not weakly amenable. One can also obtain similar statements for $L^p_\om(\R^d)$.
\end{exm}


\section{Acknowledgements}
This work was initiated while the second named author visited Istanbul University under a fellowship from Tubitak for visiting scientists and scientists on sabbatical leave. The first named author would like to acknowledge the support of Tubitak and express his deep gratitude toward his host, Serap
\"{O}ztop. Also, the third named author would like to acknowledge the support from PIMS Institute as she is currently holding a PIMS postdoctoral Fellowship. The authors would like to thank the referee for carefully reading the paper and providing comments that have improved the exposition of the paper. This includes, in particular, finding a minor error in the earlier version of Corollary \ref{C:weighted lp algebra-(non) weak amenable}.

\end{document}